\def\MR#1{\href{http://www.ams.org/mathscinet-getitem?mr=#1}{MR#1}}
\begin{document}
\theoremstyle{plain}
\newtheorem{theorem}{Theorem}[section]
\newtheorem{lemma}{Lemma}[section]
\newtheorem{proposition}{Proposition}[section]
\newtheorem{corollary}{Corollary}[section]

\theoremstyle{definition}
\newtheorem{definition}[theorem]{Definition}
\newtheorem{example}[theorem]{Example}

\newtheorem{remark}{Remark}[section]
\newtheorem{remarks}[remark]{Remarks}
\newtheorem{note}{Note}
\newtheorem{case}{Case}

\numberwithin{equation}{section}
\numberwithin{table}{section}
\numberwithin{figure}{section}



\def\div{\text{\rm div}}
\def\cal{\Cal } 
\def\d{{\mathrm d}}
\def\tr|{|\! |\! |}

\def\i{\mathrm{i} }
\def\e{\mathrm{e} }
\def\C{{\mathbb C}}
\def\P{{\mathbb P}}
\def\R {{\mathbb R}}
\def\N{{\mathbb N}}
\def\Z{{\mathbb Z}}
\def\N{{\mathbb N}}
\def\T{{\mathbb T}}
\def\Re{\text {\rm Re} }

\def\wU{\widehat U}
\def\ww{\widehat w}
\def\wR{\widehat R}
\def\he{\hat e}

\def\D{{\mathscr{D}}}

\def\A{{\mathcal{A}}}
\def\E{{\mathcal{E}}}
\def\L{{\mathcal{L}}}
\def\M{{\mathcal{M}}}
\def\X{{\mathcal{X}}}
\def\V{{\mathcal{V}}}
\def\U{{\mathcal{U}}}
\def\W{{\mathcal{W}}}

\def\<{{\langle }}
\def\>{{\rangle }}

\def\wU{\widehat U}
\def\wV{\widehat V}

\def\AA{\text{${\mathcal O}\hskip-3pt\iota$}}

\def\<{{\langle }}
\def\>{{\rangle }}

\newcommand{\stig}[1]{{\color{red}{#1}}}


\title[DG methods for parabolic equations]
{A priori and a posteriori error estimates\\
  for discontinuous Galerkin time-discrete methods\\
  via maximal regularity}

\author[Georgios Akrivis]{Georgios Akrivis}
\address{Department of Computer Science and Engineering, University of Ioannina, 
451$\,$10 Ioannina, Greece, and Institute of Applied and Computational Mathematics, 
FORTH, 700$\,$13 Heraklion, Crete, Greece}
\email {\href{mailto:akrivis@cse.uoi.gr}{akrivis{\it @\,}cse.uoi.gr}}

\author[Stig Larsson]{Stig Larsson}
\address{Department of Mathematical Sciences, Chalmers University of Technology
and University of G\"oteborg, G\"oteborg, Sweden}
\email {\href{mailto:stig@chalmers.se}{stig{\it @\,}chalmers.se}}  
\date{\today}

\keywords{Discontinuous Galerkin methods, maximal regularity, error
  estimates, parabolic equations}

\subjclass[2020]{65M12, 65M15}

\thanks{S.~L.~was supported by Vetenskapsr\aa{}det (VR) through grant
  no.~2017-04274.}

\begin{abstract}
  The maximal regularity property of discontinuous Galerkin methods
  for linear parabolic equations is used together with variational
  techniques to establish a priori and a posteriori error estimates of
  optimal order under optimal regularity assumptions. The analysis is
  set in the maximal regularity framework of UMD Banach spaces.
  Similar results were proved in an earlier work, based on the
  consistency analysis of Radau~IIA methods. The present error
  analysis, which is based on variational techniques, is of
  independent interest, but the main motivation is that it extends to
  nonlinear parabolic equations; in contrast to the earlier work. Both
  autonomous and nonautonomous linear equations are considered.
\end{abstract}


\maketitle


\section{Introduction}\label{Se:1}

We consider the discretization of differential equations satisfying
the maximal parabolic $L^p$-regularity property in unconditional
martingale differences (UMD) Banach spaces by discontinuous Galerkin
(dG) methods.  We combine the maximal regularity property of the
methods with variational techniques and establish optimal order,
optimal regularity, a priori and a posteriori error estimates.

\subsection{Maximal parabolic regularity}\label{SSe:1.1}
We consider an initial value problem for a linear parabolic equation,
\begin{equation}
\label{ivp}
\left \{
\begin{aligned} 
&u' (t) +A u(t) =f(t), \quad 0<t< T,\\
&u(0)=u_0,
\end{aligned}
\right .
\end{equation}
in a UMD Banach space $X$ with initial value $u_0\in X.$ Our
structural assumption is that the closed operator $-A$ is the generator
of an analytic semigroup on $X$ having \emph{maximal
  $L^p$-regularity}. This means that for vanishing initial value
$u_0=0$, for any $T\in (0,\infty],$ for some, or, as it turns out, for
all $p\in (1,\infty),$ and for any $f\in L^p((0,T);X)$ there exists a
unique solution $u$ of \eqref{ivp} such that $u'\in L^p((0,T);X)$;
then we also have $Au\in L^p((0,T);X).$ As a consequence of the closed
graph theorem, the solution $u$ of \eqref{ivp} with $u_0=0$ satisfies
the stability estimate
\begin{equation}
\label{max-reg}
\|u'\|_{L^p((0,T);X)}+\|Au\|_{L^p((0,T);X)}\leqslant c_{p,X}\|f\|_{L^p((0,T);X)}
\quad\forall f\in L^p((0,T);X)
\end{equation}
with a constant $c_{p,X}$ independent  of $T,$ depending only on $p$ and $X$;
see, e.g., \cite{DHP} and \cite{KuW}. 

Since also $\|f\|_{L^p((0,T);X)}\leqslant \|u'\|_{L^p((0,T);X)}+\|Au\|_{L^p((0,T);X)}$
by the triangle inequality, the norm of the sum $ \|u'+Au\|_{L^p((0,T);X)}$
and the sum of the norms $\|u'\|_{L^p((0,T);X)}+\|Au\|_{L^p((0,T);X)}$ are equivalent 
on the Banach space
\begin{equation*}
  \big\{v\in W^{1,p}((0,T);X)\cap L^p((0,T);\D(A)): v(0)=0 \big\}
\end{equation*}
with constants independent of $T,$ for $T<\infty$.  Here $\D(A):=\{v\in X: Av \in X\}$
is the domain of the operator $A$.

\subsection{The numerical methods}\label{SSe:1.2}
We consider the discretization of the initial value problem \eqref{ivp} 
by dG methods. 

Let $N\in \N, k=T/N$ be the constant time step,
$t_n:=nk, n=0,\dotsc,N,$ be a uniform partition of the time interval
$[0,T],$ and $J_n:= (t_n,t_{n+1}].$

For $s\in \N_0,$ we denote by $\P(s)$ and $\P_{X'}(s)$ the spaces of
polynomials of degree at most $s$ with coefficients in $\D(A)$ and in
the dual $X'$ of $X$, respectively, i.e., the elements $g$ of $\P(s)$
and of $\P_{X'}(s)$, respectively, are of the form
\begin{equation*}
  g(t)= \sum_{j=0}^s  t^j w_j, \quad
  w_j\in  \D(A)\quad\text{and}\quad w_j\in  X', \quad j=0,\dotsc, s.
\end{equation*}
With this notation, let $\V_k^{\text{c}} (s)$ and
$\V_k^{\text{d}} (s)$ be the spaces of continuous and possibly
discontinuous piecewise elements of $\P(s)$, respectively,
\begin{align*}
&\V_k^{\text{c}} (s):=\{v\in C\big ([0,T];\D(A)\big ): v|_{J_n}\in \P(s), \ n=0,\dotsc, N-1\},\\
&\V_k^{\text{d}} (s):=\{v: (0,T]\to \D(A), \ v(0)\in X,\ v|_{J_n}\in \P(s), \ n=0,\dotsc, N-1\}.
\end{align*}
The spaces $\X_k^{\text{c}} (s)$ and $\X_k^{\text{d}} (s)$ are defined
analogously, with coefficients $w_j\in X$.  Let us emphasize that we
allow the values at $0$ of functions in $\V_k^{\text{d}} (s)$ to belong
to $X.$

We denote by $\langle\cdot,\cdot\rangle$ the duality pairing between $X$ and $X'.$

For $q\in \N,$ with starting value $U(0)=U_0:=u_0\in X$ and source term $f\in L^p((0,T);X)$,
we  consider the discretization of the initial value problem \eqref{ivp}  by the 
\emph{discontinuous Galerkin method} dG$(q-1)$, i.e., we seek $U\in \V_k^{\text{d}} (q-1)$ such that
\begin{equation}
\label{dg}
\int_{J_n}   \big( \< U' ,v \>  + \< AU ,v \> \big) \, \d t  
+ \< U_n^{+}-U_n, v_n^{+}\>
= \int_{J_n} \<f,v\>\, \d t \quad \forall v \in \P_{X'}(q-1)
\end{equation}
for $n=0,\dotsc,N-1$.  As usual, we use the notation $v_n:=v(t_n),$
$v_n^{+}:=\lim_{s\searrow 0} v(t_n+s)$.

\subsubsection{A reconstruction operator}\label{SSSe:1.2.1}
With $0<c_1<\dotsb<c_q=1$ the Radau nodes in the interval $[0,1]$, 
let $t_{ni}:=t_n+c_ik, i=1,\dotsc,q,$ be the intermediate nodes; 
we also use the notation $t_{n0}:=t_n.$
The \emph{reconstruction operator} $\V_k^{\text{d}} (q-1) \to \X_k^{\text{c}} (q), w\mapsto \ww,$ 
defined via extended interpolation at the Radau nodes, cf.\ \cite{MN},
\begin{equation}
\label{eq:ext-interp}
\ww ( t_{nj})= w( t_{nj}), \quad j=0, \dotsc ,q \quad (w(t_{n0})=w_n),
\end{equation}
plays a crucial role in our analysis; notice that
$\ww\in \V_k^{\text{c}} (q),$ provided $w(0)\in \D(A).$ 

\subsection{Main results}\label{SSe:1.3}
We establish the following optimal order, optimal regularity, a priori and
a posteriori error estimates.

\begin{theorem}[A priori error estimates]\label{Theorem1} 
  Let $p\in(1,\infty)$ and assume that the solution of
  \eqref{ivp} is sufficiently regular,
  $u\in W^{q,p}\big ((0,T);\D(A)\big ).$ Then, the dG approximation
  $U\in \V_k^{\text{d}} (q-1)$ satisfies the estimate
  \begin{equation}
    \label{apriori-estimate1}
    \|A(u-U)\|_{L^p((0,T);X)}  \leqslant C k^q\|Au^{(q)}\|_{L^p((0,T);X)}.
  \end{equation}
  Furthermore, if $u_0\in \D(A)$ and
  $u\in W^{q+1,p}\big ((0,T);X\big ),$ for the reconstruction
  $\wU\in \V_k^{\text{c}} (q),$ we have
  \begin{equation}
    \label{apriori-estimate2}
    \begin{aligned}
      & \|(u - \widehat U)'\|_{L^p((0,T);X)} + \|A(u-\wU)\|_{L^p((0,T);X)}
      \\ & \qquad
      \leqslant
      C k^q\big( \|u^{(q+1)}\|_{L^p((0,T);X)}+ \|Au^{(q)}\|_{L^p((0,T);X)}\big).
    \end{aligned}
  \end{equation}
  The constant $C$ depends on $q, p$, and $X$, but it is independent of
  the solution $u,$ of $T,$ and of the time step $k.$
\end{theorem}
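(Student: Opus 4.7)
The plan is to leverage the maximal $L^p$-regularity of the dG method---the discrete analogue of \eqref{max-reg}---together with a suitably chosen polynomial interpolant of $u$ and classical Peano-kernel arguments.

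For \eqref{apriori-estimate1}, I would let $Iu\in\V_k^{\text{d}}(q-1)$ denote the piecewise Radau interpolant defined by $Iu(t_{ni})=u(t_{ni})$ for $i=1,\dotsc,q$ on each $J_n$. Standard interpolation theory then yields $\|A(u-Iu)\|_{L^p((0,T);X)}\leqslant C k^q\|Au^{(q)}\|_{L^p((0,T);X)}$. Inserting $Iu$ into \eqref{dg}, comparing with the continuous equation $u'+Au=f$ tested against $v\in\P_{X'}(q-1)$ on $J_n$, and using the interpolation conditions on $Iu$ together with the exactness of Radau quadrature on $\P(2q-2)$, I would show that $\Theta:=U-Iu\in\V_k^{\text{d}}(q-1)$ solves the dG system with vanishing initial value and source $r=-A(u-Iu)$. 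The discrete maximal regularity then gives $\|A\Theta\|_{L^p((0,T);X)}\leqslant C\|r\|_{L^p((0,T);X)}\leqslant C k^q\|Au^{(q)}\|_{L^p((0,T);X)}$, and the triangle inequality closes the estimate.

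For \eqref{apriori-estimate2}, the central ingredient is a collocation-type identity for the reconstruction: integration by parts in \eqref{dg}, the interpolation conditions \eqref{eq:ext-interp}, and Radau quadrature exactness force $\widehat U'+AU$ to coincide on each $J_n$ with the $X$-valued $L^2$-projection $P_n f$ of $f$ onto $\P(q-1)$. Since $\widehat U\in\V_k^{\text{c}}(q)$ with $\widehat U(0)=u_0$, the error $e:=u-\widehat U$ lies in $W^{1,p}((0,T);X)$, satisfies $e(0)=0$, and obeys $e'+Ae=(f-P_n f)+A(U-\widehat U)$. Applying the continuous maximal regularity \eqref{max-reg} and bounding the right-hand side in $L^p$ yields \eqref{apriori-estimate2}: the projection error is controlled by $\|f-P_n f\|_{L^p}\leqslant C k^q(\|u^{(q+1)}\|_{L^p}+\|Au^{(q)}\|_{L^p})$ via polynomial approximation applied to $f=u'+Au$, while $\|A(U-\widehat U)\|_{L^p}$ is controlled at order $k^q\|Au^{(q)}\|_{L^p}$ using the representation of $U-\widehat U$ on $J_n$ as a multiple of the Radau node polynomial together with the jump bounds for $\Theta$ emerging from the proof of \eqref{apriori-estimate1}.

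The main obstacle is the discrete maximal $L^p$-regularity of the dG method in the UMD Banach space setting, yielding $\|A\Theta\|_{L^p}\leqslant C\|r\|_{L^p}$ with constant independent of $k$ and $T$; together with the $L^p$-boundedness of the polynomial projection $P_n$, this relies on the $R$-boundedness/UMD machinery rather than Hilbert-space coercivity, and dovetailing it cleanly with the jump control needed in the reconstruction argument is the delicate point.
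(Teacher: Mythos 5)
Your overall architecture (an interpolant plus discrete maximal regularity for \eqref{apriori-estimate1}, then an error equation for the reconstruction for \eqref{apriori-estimate2}) matches the paper's, and you correctly identify the discrete maximal $L^p$-regularity of the dG method as the key imported ingredient. However, there is a genuine gap in your proof of \eqref{apriori-estimate1}: the Radau interpolant $Iu$ defined by $Iu(t_{ni})=u(t_{ni})$, $i=1,\dotsc,q$, does \emph{not} make $\Theta=U-Iu$ a dG solution with source $-A(u-Iu)$. Subtracting the two variational formulations, integrating by parts, and using that $u-Iu$ vanishes at $t_{n+1}$ and at $t_n$ from the left, you are left with the residual term $-\int_{J_n}\langle u-Iu,\,v'\rangle\,\d t$ with $v'\in\P_{X'}(q-2)$. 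Radau quadrature exactness on polynomials of degree $2q-2$ does not annihilate this term, because $u-Iu$ is not a polynomial (take $u(t)=t^{q+1}$ and $q\geqslant 2$ to see it is nonzero). Estimating it instead, e.g.\ by an inverse inequality on $v'$, costs a factor $k^{-1}$ and yields only $O(k^{q-1})$. This is precisely why the paper works with the interpolant $\tilde u$ of \eqref{eq:tilde-u1}, which interpolates $u$ at the right endpoints \emph{and} whose error is $L^2$-orthogonal to $\P(q-2)$ on each $J_n$: these two conditions are exactly what make the offending term vanish (property \eqref{eq:rho1}), leaving the pure source $A(u-\tilde u)$ to which the discrete maximal regularity estimate \eqref{eq:max-reg-dG} applies. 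For $q=1$ the two interpolants coincide and your argument goes through; for $q\geqslant 2$ you must replace $Iu$ by $\tilde u$.

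Your route to \eqref{apriori-estimate2} is a genuine alternative to the paper's, and its first step is sound: the identity $\widehat U'+AU=P_{q-1}f$ on each $J_n$ does follow from Radau exactness, since $\widehat U-U$ is a multiple of the degree-$q$ polynomial vanishing at the Radau nodes. You then apply \emph{continuous} maximal regularity \eqref{max-reg} to $\hat e$, which is close in spirit to the paper's a posteriori argument (Theorem \ref{Theorem2}); the paper instead stays on the discrete side, bounding $\|\hat\vartheta'\|+\|A\hat\vartheta\|$ directly by \eqref{eq:max-reg-dG} and adding the reconstruction interpolation estimates \eqref{eq:approx-prop-desired1}--\eqref{eq:approx-prop-desired2}. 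Note, though, that your remaining term $\|A(U-\widehat U)\|_{L^p}$ reduces to an $\ell^p$ bound on the jumps $A(U_n^{+}-U_n)$, which you propose to extract from the proof of \eqref{apriori-estimate1}; so this part inherits the gap above and also needs an explicit inverse estimate to pass from $\|A\Theta\|_{L^p(J_n)}$ to nodal values. Once the interpolant is corrected, both of these can be repaired.
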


\begin{theorem}[A posteriori error estimate]\label{Theorem2} 
  For initial value $u_0\in \D(A),$ let
  $R(t):= \widehat U'(t)+A\widehat U(t)-f(t)$ be the residual of the
  reconstruction $\wU\in \V_k^{\text{c}} (q)$ of the dG approximation
  $U\in \V_k^{\text{d}} (q-1).$ Then, the following maximal regularity
  a posteriori error estimate holds:
  \begin{equation}
    \label{aposteriori-estimate1}
    \|R\|_{L^p((0,t);X)}
    \leqslant  \|(u - \widehat U)'\|_{L^p((0,t);X)}
    +\|A(u - \widehat U)\|_{L^p((0,t);X)}
    \leqslant c_{p,X}\|R\|_{L^p((0,t);X)}
  \end{equation}
  for all $0<t\leqslant T$ for any $p\in (1,\infty)$ with the constant
  $c_{p,X}$ from \eqref{max-reg}.
  Furthermore, the estimator is of optimal asymptotic order of accuracy,
  \begin{equation}
    \label{aposteriori-estimate2}
    \|R\|_{L^p((0,T);X)}
    \leqslant  C k^q\big (\|u^{(q+1)}\|_{L^p((0,T);X)}
    + \|Au^{(q)}\|_{L^p((0,T);X)} \big ), 
  \end{equation}
  provided that $u\in W^{q,p}\big ((0,T);\D(A)\big )\cap W^{q+1,p}\big ((0,T);X\big ).$ 
\end{theorem}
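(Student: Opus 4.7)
The overall plan is to reduce the two-sided bound \eqref{aposteriori-estimate1} to the continuous maximal regularity estimate \eqref{max-reg} applied to the reconstruction error $e := u - \widehat U$, and then to combine the resulting upper bound for $\|R\|_{L^p}$ with the a priori estimate \eqref{apriori-estimate2} of Theorem \ref{Theorem1} to deduce the optimal order bound \eqref{aposteriori-estimate2}.

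The key step is to derive an evolution equation for $e$ with vanishing initial data. Subtracting $f = u' + Au$ from the definition $R = \widehat U' + A\widehat U - f$ yields
\begin{equation*}
  e'(t) + Ae(t) = -R(t) \quad \text{a.e.\ on } (0,T),
\end{equation*}
while the interpolation condition \eqref{eq:ext-interp} at the node $t_{00} = 0$ gives $\widehat U(0) = U_0 = u_0$, so $e(0) = 0$. Because $u_0 \in \D(A)$, the reconstruction $\widehat U$ lies in $\V_k^{\text{c}}(q)$, so $\widehat U'$ and $A\widehat U$ are continuous piecewise polynomials with values in $X$; in particular $R \in L^p((0,T);X)$. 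The first inequality in \eqref{aposteriori-estimate1} is then just the triangle inequality applied to $R = -(e' + Ae)$ on $(0,t)$, and the second follows by invoking \eqref{max-reg} for $e|_{(0,t)}$ with source $-R|_{(0,t)}$ and zero initial value, using that the constant $c_{p,X}$ does not depend on the length of the interval. Chaining the first inequality of \eqref{aposteriori-estimate1} (taken with $t = T$) with the a priori estimate \eqref{apriori-estimate2} yields \eqref{aposteriori-estimate2}.

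There is essentially no obstacle in the argument: the two items that must be checked are that $\widehat U(0) = u_0$ (which is the $j = 0$ case of \eqref{eq:ext-interp}) and that $R \in L^p((0,T);X)$ (immediate once $\widehat U$ is known to lie in $\V_k^{\text{c}}(q)$). The conceptual point is that the reconstruction $\widehat U$ has been designed precisely so that its residual satisfies an error equation with zero initial data, allowing the a posteriori analysis to collapse to a direct application of continuous maximal regularity, with the optimal order bound then inherited from Theorem \ref{Theorem1}.
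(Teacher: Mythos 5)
Your argument is correct and coincides with the paper's own proof: both derive the error equation $\hat e'+A\hat e=-R$ with $\hat e(0)=0$, obtain the lower bound by the triangle inequality and the upper bound from the continuous maximal regularity estimate \eqref{max-reg}, and deduce \eqref{aposteriori-estimate2} by combining the representation of $R$ with the a priori estimate \eqref{apriori-estimate2}. No further comment is needed.
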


Our proofs of Theorems~\ref{Theorem1} and \ref{Theorem2} rely on the
maximal regularity property of the dG methods from \cite{AM-SINUM}
and on variational techniques. The variational technique is applicable
also in the case of nonlinear parabolic equations; see \cite{AL}; this
is our main motivation.

Similar error estimates were recently established in
\cite{AM-SINUM}. The proofs in \cite{AM-SINUM} rely on properties of
the Radau~IIA methods; in particular, the proof of the optimality of
$\|R\|_{L^p((0,T);X)}$ in \cite{AM-SINUM} is significantly lengthier
and more involved.

As the proof of Theorem~\ref{Theorem2} is very short, we give it here.

\begin{proof}
  Our assumption $u_0\in \D(A)$ ensures that the reconstruction
  $\widehat U$ of the dG approximation $U$ belongs to
  $\V_k^{\text{c}} (q).$ The residual $R,$ the amount by which
  $\widehat U$ misses being an exact solution of the differential
  equation in \eqref{ivp}, is a computable quantity, depending only on
  the numerical solution $\widehat U$ and the given forcing term $f.$
  Replacing $f$ in the residual by $u'+A u,$ we see that the error
  $\hat e:=u-\widehat U$ satisfies the \emph{error equation}
  \begin{equation}
    \label{eq:err-equation}
    \hat e'(t)+A\hat e(t)=-R(t), \quad t\in (t_n,t_{n+1}],
    \ n=0,\dotsc,N-1; \quad \hat e(0)=0.
  \end{equation}
  Now, the triangle inequality and the maximal $L^p$-regularity
  \eqref{max-reg} of the operator $A$ applied to the error equation
  \eqref{eq:err-equation} yield \eqref{aposteriori-estimate1}, i.e.,
  the asserted lower and upper a posteriori error estimators.
  
  In view of the representation \eqref{eq:err-equation} of $R,$ the
  optimality \eqref{aposteriori-estimate2} of the residual is an
  immediate consequence of the a priori error estimate
  \eqref{apriori-estimate2}.
\end{proof}

We present the a priori error analysis in Section~\ref{Se:2}. In
Section~\ref{Se:3} we extend these results to the case of
nonautonomous linear parabolic equations. The Appendix contains proofs
of relevant interpolation error estimates.

\section{A priori error estimates}\label{Se:2}

In this section we prove Theorem~\ref{Theorem1}. 

\subsection{A discrete $\ell^p(X)$-norm and its equivalence to the continuous $L^p(X)$-norm}\label{SSe:discr-norm}
We introduce the discrete $\ell^p(X)$-norm 
$\|\cdot \|_{\ell^p ((0,T);X )}$ on $\V_{k,0}^{\text{d}} (q-1):=\{v\in \V_k^{\text{d}} (q-1): v(0)=0\}$
and on $\V_{k,0}^{\text{c}} (q):=\{v\in \V_k^{\text{c}} (q): v(0)=0\}$ by
\begin{equation}
\label{eq:discrete-norm}
\|v \|_{\ell^p ((0,T);X )}:=\Big (\sum_{\ell=0}^{N-1} \Big (k\sum_{i=1}^q \|v(t_{\ell i})\|_{X}^p\Big )\Big)^{1/p}
\end{equation}
and show that it is equivalent to  the continuous $\|\cdot \|_{L^p ((0,T);X )}$-norm.

Let us focus on the space  $\V_{k,0}^{\text{d}} (q-1);$ the proof for the space $\V_{k,0}^{\text{c}} (q)$
is completely analogous.
First, it is easily seen that the continuous norm is dominated by the discrete norm,
\begin{equation}
\label{eq:discrete-norm-dom}
\|v \|_{L^p ((0,T);X)}\leqslant \tilde c_{q,p} \|v \|_{\ell^p ((0,T);X )}
\quad \forall v\in \V_{k,0}^{\text{d}} (q-1).
\end{equation}
Indeed, with $\ell_{mi}$ the Lagrange polynomials
$ \ell_i\in \P_{q-1}$ for the Radau points $c_1,\dotsc,c_q,$ shifted
to the subinterval $ J_m$, we have
\begin{equation*}
\begin{split}
 \int _{J_m}  \|  v(t) \|_{X}^p \, \d t   &=   \int _{J_m}  \Big\|
 \sum_{i=1}^q \ell_{mi}(t) v(t_{mi}) \Big\|_{X}^p \, \d t  
 \leqslant k \Big ( \sum_{i=1}^q  \| \ell_{mi}  \|_{L^\infty (J_m)} \|  v(t_{mi})\|_{X} \Big ) ^p\\
  & \leqslant  \Big ( \sum_{i=1}^q  \| \ell_{mi} \|^{p'} _{L^\infty (J_m)} \Big )^{p/p'}     
  \Big ( \sum_{i=1}^q  k  \|  v(t_{mi}) \|_{X}^p \Big )
   =c_{q,p}  k\sum_{i=1}^q  \|  v(t_{mi}) \|_{X}^p 
\end{split}
\end{equation*}
with $p'$ the dual exponent of $p, \frac 1p+ \frac 1{p'}=1,$ and summing over $m,$ we obtain \eqref{eq:discrete-norm-dom};
cf.~\cite{AM-SINUM}.

Next, we prove  that the discrete norm is dominated by the continuous norm
in the reference element $[0,1]$ for polynomials $v$ of degree at most $q-1$ 
with coefficients in $X;$ a scaling argument 
then shows that this is the case in arbitrary intervals $(0,T).$\footnote{We thank Pedro Morin 
for a personal communication regarding this result.}
We use the Lagrange form of $v,$
\[v(t)=\sum_{i=1}^q \ell_i(t) v(c_ i),\quad t\in [0,1].\]
%

Let $i$ be such that
\begin{equation}
\label{eq:app-est1}
\| v(c_ i)\|_{X}=\max_{1\leqslant j\leqslant q}\| v(c_ j)\|_{X}.
\end{equation}
We want to show that
\begin{equation*}
\label{eq:app-est2}
\| v(c_ i)\|_{X}\leqslant c \| v\|_{L^p((0,1);X)}.
\end{equation*}
We have
\[\| v\|_{L^p((0,1);X)}^p\geqslant \int_{c_i-\delta}^{c_i} \| v(t)\|_{X}^p\, \d t
=\int_{c_i-\delta}^{c_i} \Big\|\ell_i(t)v(c_ i)+\sum_{\substack{j=1\\ j\ne i}}^q \ell_j(t)v(c_ j) \Big\|_{X}^p\, \d t,\]
whence, in view of \eqref{eq:app-est1},
\begin{equation}
\label{eq:app-est3}
\| v\|_{L^p((0,1);X)}^p\geqslant \| v(c_ i)\|_{X}^p
\int_{c_i-\delta}^{c_i} \Big (|\ell_i(t)|-\sum_{\substack{j=1\\ j\ne i}}^q |\ell_j(t)|\Big )^p\, \d t.
\end{equation}
Now, for any $\varepsilon_1, \varepsilon_2\in (0,1),$ for sufficiently small $\delta,$ we have
\begin{equation}
\label{eq:app-est4}
|\ell_i(t)|\geqslant 1-\varepsilon_1,\quad  |\ell_j(t)|\leqslant  \varepsilon_2,\quad j\ne i, \quad \forall t\in [c_i-\delta,c_i],
\end{equation}
and \eqref{eq:app-est3} yields
\begin{equation}
\label{eq:app-est5}
\| v\|_{L^p((0,1);X)}^p\geqslant 
\delta \big (1-\varepsilon_1-(q-1)\varepsilon_2\big )^p\| v(c_ i)\|_{X}^p
\end{equation}
and the desired property follows  easily.

The proof for the space $\V_{k,0}^{\text{c}} (q)$ is completely analogous.
In this case we shift the Lagrange polynomials $ \hat \ell_i\in \P_q$ for the points $c_0=0,c_1,\dotsc,c_q,$ 
to a subinterval  $ J_m$,  and use the fact that $t_{n,0}:=t_n=t_{n-1,q}.$

Notice that it is obvious from \eqref{eq:ext-interp} that the discrete $\ell^p(X)$-norms
of an element  $v\in \V_{k,0}^{\text{d}} (q-1)$ and of its reconstruction $\hat v\in \V_{k,0}^{\text{c}} (q)$
 coincide,
\begin{equation}
\label{eq:discrete-norm-equal}
\|\hat v \|_{\ell^p ((0,T);X )}=\|v \|_{\ell^p ((0,T);X )}
\quad \forall v\in \V_{k,0}^{\text{d}} (q-1).
\end{equation}

Since the equivalence constants of the discrete $\|\cdot \|_{\ell^p ((0,T);X )}$-  and
continuous $\|\cdot \|_{L^p ((0,T);X )}$-norms are independent of $T,$
the corresponding discrete $\|\cdot \|_{\ell^p ((0,t_n);X )}$-  and
continuous $\|\cdot \|_{L^p ((0,t_n);X )}$-seminorms, $n=1,\dotsc,N,$
are also equivalent with constants independent of $n.$
Of course, the  discrete $\|\cdot \|_{\ell^p ((0,t_n);X )}$-seminorm
is the term on the right-hand side of  \eqref{eq:discrete-norm}
with $N$ replaced by $n.$

\subsection{Maximal parabolic regularity of the dG method}\label{SSe:2.1}
We shall use the notation $\partial_k$ for the backward difference operator,
\[\partial_kv:=\frac {v (\cdot)- v (\cdot-k)}k,\quad v\in \V_{k,0}^{\text{d}} (q-1)\cup\V_{k,0}^{\text{c}} (q)\]
%
with $v=0$ in the interval $[-k,0).$
Obviously, $\partial_k$ commutes with the reconstruction operator, $\partial_k \hat v=\widehat {\partial_k  v},
 v\in \V_{k,0}^{\text{d}} (q-1).$

In the case of vanishing initial value $u_0=0,$ for
the reconstruction $\wU$ and the dG approximation $U$ we have
the following maximal parabolic regularity result
%
\begin{equation}
\label{eq:max-reg-dG}
\begin{split}
\| \partial_k\wU\|_{\ell^p((0,T);X)}&+\| \wU '\|_{L^p((0,T);X)}+\|A \wU\|_{L^p((0,T);X)}\\ 
&+\|AU\|_{L^p((0,T);X)} \leqslant C_{p,X} \|f\|_{L^p((0,T);X)},
\end{split}
\end{equation}
with $C_{p,X}$ a method-dependent constant, independent of $N$ and $T.$  
The estimate for the last three terms on the left-hand side of \eqref{eq:max-reg-dG}
is given in \cite[(1.9) and the last line on p.\ 186]{AM-SINUM}; the proof relies on the interpretation 
of dG methods in \cite{AM-SINUM} as modified Radau~IIA methods and on the
maximal regularity property of Radau~IIA methods from \cite{KLL}.
Notice that $\widehat U\in \V_k^{\text{c}} (q)$ since $u_0=0\in \D(A).$ 

It remains to prove the estimate for the first term on the left-hand side of \eqref{eq:max-reg-dG}.
Now, the nodal values $\wU_{ni}=U_{ni}=U(t_{ni}), n=0,\dotsc,N-1, i=1,\dotsc,q,$ 
satisfy the Radau IIA equations with the nodal values $f(t_{ni})$ of the forcing term replaced 
 by the averages $f_{ni},$ 
\begin{equation}\label{average}
f_{ni}:=\frac 1 {\int _{J_n} \ell_{ni} (s) \, \d s }  \int _{J_n}  \ell_{ni} (s)f(s)  \, \d s,\quad
n=0,\dotsc,N-1,\ i=1,\dotsc,q;
\end{equation}
see \cite[Lemma 2.2]{AM-SINUM}.   Therefore, according to the maximal regularity result
of  the Radau~IIA methods in \cite[Lemma 3.6]{KuLL}, we have
\begin{equation}
\label{eq:Rad-maxreg1}
\sum_{i=1}^q\|(\partial \wU_{ni})_{n=0}^{N-1}\|_{\ell^p(X)}^p\leqslant C_{p,r}\sum_{i=1}^q\|(  f_{ni})_{n=0}^{N-1}\|_{\ell^p(X)}^p;
\end{equation}
here, $\wU_{-1,1}=\dotsb=\wU_{-1,q}=0,$ and for a sequence $(v_n)_{n\in \N_0}\subset X,$ we used the notation 
\[\partial v_n:=\frac {v_n-v_{n-1}}k\quad\text{and}\quad \|(v_n)_{n=0}^{N-1}\|_{\ell^p(X)}:=\Big (k \sum_{n=0}^{N-1}\|v_n\|_X^p\Big )^{1/p}\]
for the backward difference quotient and for the discrete $\ell^p(X)$-norm $\|\cdot\|_{\ell^p(X)}.$

Notice that the term on the left-hand side of \eqref{eq:Rad-maxreg1} coincides with 
the first term on the left-hand side of \eqref{eq:max-reg-dG} raised to the power $p.$  Furthermore,
\begin{equation}
\label{bound_f_intro}
 \sum_{i=1}^q\|(  f_{ni})_{n=0}^{N-1}\|_{\ell^p(X)}^p  \leqslant  \gamma  \|f\|_{L^p((0,T);X)}^p 
\end{equation}
with a constant $\gamma $ depending only on $c_1\dotsc,c_q,$ and $p;$
see \cite[(2.12)]{AM-SINUM}. Now, \eqref{eq:Rad-maxreg1} and \eqref{bound_f_intro}
yield
\[\| \partial_k\wU\|_{\ell^p((0,T);X)} \leqslant C_{p,X} \|f\|_{L^p((0,T);X)},\]
and the proof of \eqref{eq:max-reg-dG} is complete.

Obviously, \eqref{eq:max-reg-dG} is also valid with $T$ replaced by $t_n,n=1,\dotsc,N.$

Let us note that the estimate for the  first term on the left-hand side of \eqref{eq:max-reg-dG}
is not needed in the error analysis for linear parabolic equations; however, it plays a key
role  in the error analysis for nonlinear parabolic equations; see \cite{AL}.
The analogous estimate for Radau IIA methods from \cite[Lemma 3.6]{KuLL} 
was used there  in the error analysis for Radau IIA methods for nonlinear parabolic equations.

Notice also that due to the equivalence of the discrete $\ell^p(X)$- and the continuous $L^p(X)$-norms
established in section \ref{SSe:discr-norm}, $\| \partial_k\wU\|_{\ell^p((0,T);X)}$ can be replaced by
$\| \partial_k\wU\|_{L^p((0,T);X)}$ in the maximal regularity estimate \eqref{eq:max-reg-dG}.

Logarithmically quasi-maximal parabolic regularity results for dG
methods were earlier established in \cite{LV1} in general Banach
spaces for autonomous equations and in \cite{LV2} in Hilbert spaces
for nonautonomous equations. These works are not based on the
  maximal regularity theory and therefore cover also the cases of
variable time steps as well as the critical exponents $p=1,\infty.$

More recently, another approach to the maximal regularity of dG
  time discretization was presented in \cite{KK2024}. This is based on
  studying the dG approximation of a temporally regularized Green's
  function. The result allows quasi-uniform meshes but is restricted
  to $q\geqslant 2$.

\subsection{An interpolant and its approximation properties}\label{SSe:2.2}
We shall use a standard interpolant $\tilde u\in \V_k^{\text{d}} (q-1)$ of the solution $u$ such that
$\tilde u(t_n)=u(t_n), n=0,\dotsc,N,$ and $u-\tilde u$ is in each subinterval $J_n$ orthogonal
to polynomials of degree at most $q-2$ (with the second condition being void for $q=1$);
then, $\tilde u$ is determined in $J_n$ by the conditions
\begin{equation}
\label{eq:tilde-u1}
\left\{
\begin{aligned}
&\tilde u(t_{n+1})=u(t_{n+1}),\\
&\int_{J_n} \big (u(t)-\tilde u(t)\big )t^j\, \d t=0,\quad j=0,\dotsc,q-2;
\end{aligned}
\right.
\end{equation}
cf., e.g., \cite[(12.9)]{T} and \cite[\textsection\textsection 3.1--3.3]{SS} for the case of Hilbert spaces.

The approximation property (valid for $1\leqslant p\leqslant\infty$,
$q\geqslant 1$)
\begin{equation}
\label{eq:estim-desired1}
\|u-\tilde u\|_{L^p((0,T);X)} \leqslant Ck^{q} \|u^{(q)}\|_{L^p((0,T);X)} 
\end{equation}
will play an important role in our analysis.
Furthermore, we will use the following approximation properties of the
reconstruction $\hat {\tilde u}$ of $\tilde u$, 
\begin{align}
\label{eq:approx-prop-desired1}
\| u-\hat {\tilde u}\|_{L^p((0,T);X)}
&
\leqslant C k^{q}\|u^{(q)}\|_{L^p((0,T);X)},
\\
\label{eq:approx-prop-desired2}
\| (u-\hat {\tilde u})'\|_{L^p((0,T);X)}
&
\leqslant C k^{q}\|u^{(q+1)}\|_{L^p((0,T);X)}.  
\end{align}
We shall prove existence and uniqueness of $\tilde u$ as well as the
approximation properties \eqref{eq:estim-desired1},
\eqref{eq:approx-prop-desired1}, and \eqref{eq:approx-prop-desired2}
in the appendix.

Notice that the orthogonality condition in \eqref{eq:tilde-u1} can be
equivalently written in the form
\begin{equation}
\label{eq:tilde-u3}
\int_{J_n}   \<u(t)-\tilde u(t) ,v(t) \>  \, \d t
=0 \quad \forall v \in \P_{X'}(q-2).
\end{equation}

\subsection{A priori error estimates}\label{SSe:2.3}
Using the interpolant $\tilde u,$ we decompose the error $e=u-U$ in the form
\begin{equation*}
e=\rho + \vartheta
\quad\text{with}
\quad \rho:=u-\tilde u
\quad\text{and}
\quad
\vartheta:=\tilde u-U\in \V_k^{\text{d}} (q-1).
\end{equation*}
The desired estimate for the interpolation error $\rho,$ 
\begin{equation}
\label{eq:rho-estimate}
\|A\rho\|_{L^p((0,T);X)}\leqslant C k^q\|Au^{(q)}\|_{L^p((0,T);X)}, 
\end{equation}
is obtained by replacing $u$ by $Au$ in \eqref{eq:estim-desired1}.
Therefore, to prove  \eqref{apriori-estimate1}, it remains to bound $\vartheta.$ 

Since $\rho=u-\tilde u$ is orthogonal to $v'$ in $J_n$ for any
$v\in \P_{X'}(q-1)$, cf.~\eqref{eq:tilde-u3}, and
vanishes at the nodes $t_n,$ integration by parts shows that it has
the following crucial property,
\begin{equation}
\label{eq:rho1}
\int_{J_n}   \< \rho' ,v \>  \, \d t
+ \< \rho_n^{+}-\rho_n, v_n^{+}\>
=0 \quad \forall v \in \P_{X'}(q-1);
\end{equation}
cf.\ \cite[(12.13)]{T}.

Subtracting the dG method \eqref{dg} from the corresponding relation
for the exact solution, and using the splitting $e=\rho+\vartheta$ as
well as relation \eqref{eq:rho1} for $\rho,$ we obtain the following
equation for $\vartheta,$
\begin{equation}
\label{eq:theta1}
\int_{J_n}   \big( \< \vartheta' ,v \>  + \< A\vartheta ,v \> \big) \, \d t  
+ \< \vartheta_n^{+}-\vartheta_n, v_n^{+}\>
= \int_{J_n}  \< A\rho ,v \> \, \d t \quad \forall v \in \P_{X'}(q-1)
\end{equation}
for $n=0,\dotsc,N-1$.  Notice that the reconstruction
$\hat \vartheta=\hat{\tilde u}-\wU$ of $\vartheta$ belongs to
$ \V_k^{\text{c}} (q)$ since $\vartheta(0)=0\in \D(A);$ in contrast,
$\hat{\tilde u}$ and $\wU$ belong only to $\X_k^{\text{c}} (q)$ for
$u_0\in X.$

Since $\vartheta(0)=0,$ the maximal regularity property
\eqref{eq:max-reg-dG} of the dG method applied to the error equation
\eqref{eq:theta1} yields
\begin{equation*}
\|\hat\vartheta'\|_{L^p((0,T);X)}
  +\|A \hat\vartheta\|_{L^p((0,T);X)}
  +\|A \vartheta\|_{L^p((0,T);X)}
  \leqslant C_{p,X} \|A\rho\|_{L^p((0,T);X)},
\end{equation*}
and thus, in view of \eqref{eq:rho-estimate}, 
\begin{equation}
\label{eq:theta2}
\|  \hat\vartheta'\|_{L^p((0,T);X)}
+\|A \hat\vartheta\|_{L^p((0,T);X)}
+\|A \vartheta\|_{L^p((0,T);X)}
\leqslant C k^q\|Au^{(q)}\|_{L^p((0,T);X)}.
\end{equation}
Now, \eqref{apriori-estimate1} follows immediately from
\eqref{eq:rho-estimate} and \eqref{eq:theta2}.  To prove
\eqref{apriori-estimate2}, we write
\begin{equation*}
  \hat{e}
  =u-\widehat{U}
  =(u-\hat{\tilde{u}})
  +(\hat{\tilde{u}}-\widehat{U})
  =\hat{\rho}+\hat{\vartheta}
\end{equation*}
and combine \eqref{eq:theta2} with the estimates for
$\hat{\rho}:=u-\hat{\tilde u}$ and $\hat{\rho}'$ from
\eqref{eq:approx-prop-desired1} and \eqref{eq:approx-prop-desired2},
namely
\begin{align*}
\|A\hat{\rho}\|_{L^p((0,T);X)}
\leqslant C k^q\|Au^{(q)}\|_{L^p((0,T);X)}, \quad
\|\hat{\rho}'\|_{L^p((0,T);X)}
\leqslant C k^q\|u^{(q+1)}\|_{L^p((0,T);X)}. 
\end{align*}

\section{Extension to nonautonomous equations}\label{Se:3}

In this section, we extend the maximal parabolic regularity stability
estimates for dG methods to nonautonomous parabolic equations by a
perturbation argument. For similar ideas and results, we refer to
\cite{LV2} and \cite[\textsection 3.6]{KuLL}, \cite{AM-NM} for the dG
method with piecewise constant elements and for Radau~IIA methods,
respectively.  Furthermore, we establish optimal order a priori and a
posteriori error estimates.

We consider an initial value problem for a nonautonomous linear parabolic equation,
\begin{equation}
\label{ivp-na}
\left \{
\begin{aligned} 
&u' (t) +A(t) u(t) =f(t), \quad 0<t< T,\\
&u(0)=u_0,
\end{aligned}
\right .
\end{equation}
in a UMD Banach space $X.$ 

Our structural assumptions on $A(t)$ are that all operators
$A(t), t\in [0,T],$ share the same domain $\D(A),$ $A(t)$ is the
generator of an analytic semigroup on $X$ having maximal
$L^p$-regularity, for every $t\in [0,T],$ $A(t)$ induce equivalent
norms on $\D(A),$
\begin{equation}
\label{norms-na}
\|A(t)v\|_X\leqslant c \|A(\tilde t)v\|_X
\quad \forall t, \tilde t\in [0,T] \  \forall v\in \D(A),
\end{equation}
and $A(t)\colon \D(A)\to X$ satisfies the Lipschitz condition with
respect to $t,$
\begin{equation}
\label{Li-na}
\|\big (A(t)-A(\tilde t)\big ) v\|_X
\leqslant L|t-\tilde t| \|A(s)v\|_X
\quad \forall  t,  \tilde t \in [0, T] \  \forall v\in \D(A),
\end{equation}
for all $s\in [0,T].$

With starting value $U(0)=U_0:=u_0\in X$ and source term
$f\in L^p((0,T);X)$, we consider the discretization of the initial
value problem \eqref{ivp-na} by the dG method dG$(q-1)$, i.e., we seek
$U\in \V_k^{\text{d}} (q-1)$ such that
\begin{equation}
\label{eq:dg-na1}
\int_{J_n}   \big( \< U' ,v \>  + \< A(t)U ,v \> \big) \, \d t  
+ \< U_n^{+}-U_n, v_n^{+}\>
= \int_{J_n} \<f,v\>\, \d t \quad \forall v \in \P_{X'}(q-1)
\end{equation}
for $n=0,\dotsc,N-1$; cf.\ \eqref{dg}.

\subsection{Maximal parabolic regularity}\label{SSe:4.1}
Here, for vanishing starting value $U_0,$ we establish maximal parabolic regularity 
of dG methods for the nonautonomous parabolic equation \eqref{ivp-na} via 
a perturbation argument. 

For a fixed $t_m,m\in \{1,\dotsc,N\},$ we write \eqref{eq:dg-na1} in the form
\begin{equation}
\label{eq:dg-na2}
\begin{aligned}
 \int_{J_n}    \< U' ,v \>\, \d t &+  \int_{J_n}  \< A(t_m)U ,v   \>\, \d t 
  + \< U_n^{+}-U_n, v_n^{+}\>\\
  & =\int_{J_n}  \< (A(t_m)-A(t))U ,v   \>\, \d t 
 + \int_{J_n}  \< f  ,v   \>\, \d t\quad \forall v \in \P_{X'}(q-1),
\end{aligned}
\end{equation}
$n=0,\dotsc, m-1.$ Since the time $t$ is frozen at $t_m$ on the
left-hand side of \eqref{eq:dg-na2}, we can apply the maximal
parabolic regularity estimate \eqref{eq:max-reg-dG} for dG methods for
autonomous equations and obtain
\begin{equation}
\label{eq:dg-na3}
\begin{split}
\| \partial_k\wU\|_{\ell^p((0,t_\ell);X)}&+\| \wU '\|_{L^p((0,t_\ell);X)}
+\|A(t_m) \wU\|_{L^p((0,t_\ell);X)}+E_\ell\\
&\leqslant C_{p,X}\big (G_\ell+ \|f\|_{L^p((0,t_\ell);X)}\big ), \quad \ell=1,\dotsc,m,  
\end{split}
\end{equation}
with
\begin{equation*}
G_\ell:=\| (A(t_m)-A(\cdot))U\|_{L^p((0,t_\ell);X)},
\ E_\ell:=\|A(t_m) U\|_{L^p((0,t_\ell);X)},\  \ell=1,\dotsc,m, 
\end{equation*}
and $E_0:=0.$

Now, according to \eqref{eq:dg-na3},
\begin{equation}
\label{eq:dg-na5}
E_m^p \leqslant C\big (G_m^p+ \|f\|_{L^p((0,t_m);X)}^p\big ).
\end{equation}
Furthermore,
\begin{equation*}
  G_m^p=\| (A(t_m)-A(\cdot))U\|_{L^p((0,t_m);X)}^p
  =\sum_{\ell=0}^{m-1}\| (A(t_m)-A(\cdot))U\|_{L^p(J_\ell;X)}^p,
\end{equation*}
and thus, in view of the Lipschitz condition  \eqref{Li-na},
\begin{equation*}
  G_m^p
  \leqslant L^p \sum_{\ell=0}^{m-1}(t_m-t_\ell)^p \|
  A(t_m)U\|_{L^p(J_\ell;X)}^p
  =L^p \sum_{\ell=0}^{m-1}(t_m-t_\ell)^p (E_{\ell+1}^p-E_\ell^p).  
\end{equation*}
Hence, by summation by parts, we have
\begin{equation}\label{eq:dg-na6} 
  G_m^p \leqslant L^p\sum_{\ell=1}^{m} a_\ell E_\ell^p
  =L^p\sum_{\ell=1}^{m-1} a_\ell E_\ell^p+L^pk^pE_m^p,  
\end{equation}
with $a_\ell :=(t_m-t_{\ell-1})^p-(t_m-t_\ell)^p,$ and \eqref{eq:dg-na5} yields
\begin{equation}\label{eq:dg-na7}
  E_m^p
 \leqslant
  C\|f\|_{L^p((0,t_m);X)}^p 
  +C \sum_{\ell=1}^{m-1} a_\ell E_\ell^p 
  +Ck^pE_m^p 
\end{equation}
and hence, for $Ck^p\leqslant 1/2$,
\begin{equation*}
  E_m^p\leqslant C\|f\|_{L^p((0,t_m);X)}^p
  +C \sum_{\ell=1}^{m-1} a_\ell E_\ell^p. 
\end{equation*}
Since the sum $\sum_{\ell=1}^m a_\ell$ is uniformly bounded,
\begin{equation*}
\sum_{\ell=1}^m a_\ell=\big (t_m-t_0)^p\leqslant T^p,  
\end{equation*}
a discrete Gronwall-type argument applied to \eqref{eq:dg-na7} leads to
\begin{equation}\label{eq:dg-na8}
E_m^p\leqslant C\|f\|_{L^p((0,t_m);X)}^p.
\end{equation}

Combining \eqref{eq:dg-na8} with \eqref{eq:dg-na6} and
\eqref{eq:dg-na3}, we obtain, for sufficiently small $k$, the desired
maximal parabolic regularity stability estimate
\begin{equation}
\label{eq:dg-na9}
\begin{split}
\| \partial_k\wU\|_{\ell^p((0,t_m);X)}&+\| \wU '\|_{L^p((0,t_m);X)}+\|A(t_m) \wU\|_{L^p((0,t_m);X)} +\|A(t_m)U\|_{L^p((0,t_m);X)}\\
&\leqslant C_{p,X,T} \|f\|_{L^p((0,t_m);X)}, \quad m=1,\dotsc,N, 
\end{split}
\end{equation}
%
with a constant $C_{p,X,T}$ independent of $k.$ Notice
that, due to the equivalence of norms \eqref{norms-na}, $A(t_m) $ can
be replaced by $A(s)$ on the left-hand side of \eqref{eq:dg-na9}, for
arbitrary $s\in [0,T].$

\subsection{A priori error estimates}\label{SSe:4.2}
Here, we establish the analogue of Theorem~\ref{Theorem1} in the
nonautonomous case.

\begin{theorem}[A priori error estimates]\label{Theorem3} 
  Assume that the solution of \eqref{ivp-na} is sufficiently regular,
  $u\in W^{q,p}\big ((0,T);\D(A)\big ).$ Then, for sufficiently small
  $k$, the dG approximation $U\in \V_k^{\text{d}} (q-1)$ of
  \eqref{eq:dg-na1} satisfies the estimate
  \begin{equation}
    \label{apriori-estimate1-na}
    \|A(s)(u-U)\|_{L^p((0,T);X)}  \leqslant C k^q\|A(s)u^{(q)}\|_{L^p((0,T);X)}
  \end{equation}
  for any $s\in [0,T].$ Furthermore, if $u_0\in \D(A)$ and
  $u\in W^{q+1,p}\big ((0,T);X\big ),$ for the reconstruction
  $\wU\in \V_k^{\text{c}} (q)$ of $U,$ we have
  \begin{equation}
    \label{apriori-estimate2-na}
    \begin{aligned}
      & \|(u - \widehat U)'\|_{L^p((0,T);X)} + \|A(s)(u-\wU)\|_{L^p((0,T);X)}
      \\ & \qquad
      \leqslant
      C k^q\big( \|u^{(q+1)}\|_{L^p((0,T);X)}+ \|A(s)u^{(q)}\|_{L^p((0,T);X)}\big).
    \end{aligned}
  \end{equation}
  The constant $C$ depends on $q$, $p$, $L$, $X$, and $T$, but it is
  independent of the solution $u$ and of the time step $k.$
\end{theorem}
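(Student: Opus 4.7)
The plan is to follow the autonomous argument of Section~\ref{SSe:2.3} essentially verbatim, but to replace the autonomous maximal regularity estimate \eqref{eq:max-reg-dG} with its nonautonomous counterpart \eqref{eq:dg-na9}, and to insert the norm equivalence \eqref{norms-na} at the points where the frozen operator $A(s)$ interacts with the variable operator $A(t)$. Concretely, I keep the interpolant $\tilde u\in \V_k^{\text{d}}(q-1)$ defined by \eqref{eq:tilde-u1} and decompose $e=u-U=\rho+\vartheta$ with $\rho:=u-\tilde u$ and $\vartheta:=\tilde u-U$. The interpolation/reconstruction operators are linear and commute with the fixed bounded-below operator $A(s)\colon\D(A)\to X$, in the sense that $A(s)\tilde u$ is the interpolant of $A(s)u$ and $A(s)\hat{\tilde u}$ is its reconstruction; this immediately upgrades \eqref{eq:estim-desired1}, \eqref{eq:approx-prop-desired1}, \eqref{eq:approx-prop-desired2} to
\begin{equation*}
\|A(s)\rho\|_{L^p((0,T);X)}+\|A(s)\hat\rho\|_{L^p((0,T);X)}\leqslant Ck^q\|A(s)u^{(q)}\|_{L^p((0,T);X)},
\end{equation*}
and leaves the bound on $\hat\rho'$ unchanged.

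Next I would derive the error equation for $\vartheta$. Subtracting \eqref{eq:dg-na1} from the exact relation for $u$ (noting that $\<u_n^{+}-u_n,v_n^{+}\>=0$ by continuity of $u$), using $e=\rho+\vartheta$, and invoking the orthogonality property \eqref{eq:rho1} for $\rho$, which is purely a statement about $\rho'$ and the jumps and does not involve $A$, one obtains
\begin{equation*}
\int_{J_n}\big(\<\vartheta',v\>+\<A(t)\vartheta,v\>\big)\,\d t+\<\vartheta_n^{+}-\vartheta_n,v_n^{+}\>
=-\int_{J_n}\<A(t)\rho,v\>\,\d t\quad\forall v\in\P_{X'}(q-1),
\end{equation*}
for $n=0,\dotsc,N-1$, with $\vartheta(0)=0$. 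This is exactly the dG problem \eqref{eq:dg-na1} with forcing $-A(\cdot)\rho$ and zero initial value, so the nonautonomous maximal regularity estimate \eqref{eq:dg-na9} applies to it.

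Applying \eqref{eq:dg-na9} to this error equation yields, for each $s\in[0,T]$ (after invoking \eqref{norms-na} to swap $A(t_m)$ for $A(s)$ on the left-hand side),
\begin{equation*}
\|\hat\vartheta'\|_{L^p((0,T);X)}+\|A(s)\hat\vartheta\|_{L^p((0,T);X)}+\|A(s)\vartheta\|_{L^p((0,T);X)}\leqslant C_{p,X,T}\|A(\cdot)\rho\|_{L^p((0,T);X)}.
\end{equation*}
A second use of the equivalence of norms \eqref{norms-na} bounds the right-hand side by $c\,\|A(s)\rho\|_{L^p((0,T);X)}$, and then by $Ck^q\|A(s)u^{(q)}\|_{L^p((0,T);X)}$ via the interpolation estimate above. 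Combining with $\|A(s)\rho\|_{L^p}\leqslant Ck^q\|A(s)u^{(q)}\|_{L^p}$ gives \eqref{apriori-estimate1-na}, and splitting $u-\wU=\hat\rho+\hat\vartheta$ and inserting the bounds on $\|A(s)\hat\rho\|_{L^p}$ and $\|\hat\rho'\|_{L^p}$ yields \eqref{apriori-estimate2-na}.

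The main obstacle I anticipate is simply bookkeeping the role of the frozen parameter $s$: the maximal regularity result \eqref{eq:dg-na9} as stated freezes $A$ at the right endpoint $t_m$ of the relevant subinterval and the error equation features $A(t)$ on the left-hand side and $A(t)\rho$ on the right, so one must carefully invoke \eqref{norms-na} to convert everything to a uniform operator $A(s)$ and absorb the resulting $t$-independent multiplicative constants into $C$. There is no new technical difficulty beyond what was already handled in Section~\ref{SSe:4.1}, but the constant $C$ now depends on $T$ and on the Lipschitz constant $L$ of \eqref{Li-na} through the Gronwall step in the proof of \eqref{eq:dg-na9}.
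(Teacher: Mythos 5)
Your proposal is correct and follows essentially the same route as the paper: the same splitting $e=\rho+\vartheta$ with the interpolant of \eqref{eq:tilde-u1}, the same error equation for $\vartheta$ driven by $A(\cdot)\rho$, and the same application of the nonautonomous maximal regularity estimate \eqref{eq:dg-na9} combined with the norm equivalence \eqref{norms-na} and the interpolation bounds. The only differences are cosmetic (an immaterial sign on the right-hand side of the $\vartheta$-equation, and your making explicit the commutation of $A(s)$ with the interpolation operator and the use of \eqref{norms-na}, which the paper leaves implicit).
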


\begin{proof}
  We proceed as in the proof of Theorem~\ref{Theorem1}. In particular,
  we decompose the error $e=u-U$ in the form
  \begin{equation*}
    e=\rho+\vartheta\quad\text{with} \quad 
    \rho:=u-\tilde u 
    \quad\text{and}\quad
    \vartheta:=\tilde u-U\in \V_k^{\text{d}} (q-1) . 
  \end{equation*}
  The error $\rho$ can be estimated as in Section~\ref{SSe:2.3}; for
  instance, the analogue of \eqref{eq:rho-estimate} reads
  \begin{equation}
    \label{eq:rho-estimate-na}
    \|A(s)(u - \tilde u)\|_{L^p((0,T);X)}\leqslant c k^q\|A(s)u^{(q)}\|_{L^p((0,T);X)},
  \end{equation}
  for any $s\in [0,T].$

  Furthermore, subtracting the dG method \eqref{eq:dg-na1} from the
  corresponding equation for the exact solution
  $u$ of \eqref{ivp-na} and using the identity \eqref{eq:rho1} for
  $\rho,$ we obtain the following equation for $\vartheta,$
  \begin{equation}
    \label{eq:theta1-na}
    \int_{J_n}   \big( \< \vartheta' ,v \>  + \< A(t)\vartheta ,v \> \big) \, \d t  
    + \< \vartheta_n^{+}-\vartheta_n, v_n^{+}\>
    = \int_{J_n}  \< A(t)\rho ,v \> \, \d t \quad \forall v \in \P_{X'}(q-1)
  \end{equation}
  for $n=0,\dotsc,N-1$.  
  
  Since $\vartheta(0)=0,$ the maximal regularity property of the dG
  method for nonautonomous equations (see \eqref{eq:dg-na9}) applied
  to the error equation \eqref{eq:theta1-na} yields
  \begin{equation*}
    \|\hat\vartheta'\|_{L^p((0,T);X)}
    +\|A(s)\hat\vartheta\|_{L^p((0,T);X)}
    +\|A(s) \vartheta\|_{L^p((0,T);X)}
    \leqslant C_{p,X} \|A(\cdot)\rho\|_{L^p((0,T);X)},  
  \end{equation*}
  for any $s\in [0,T],$ and thus, in view of \eqref{eq:rho-estimate-na}, 
  \begin{equation}
    \label{eq:theta2-na}
    \|\hat\vartheta'\|_{L^p((0,T);X)}
    +\|A(s) \hat\vartheta\|_{L^p((0,T);X)}
    +\|A(s) \vartheta\|_{L^p((0,T);X)}
    \leqslant C k^q\|A(s) u^{(q)}\|_{L^p((0,T);X)}.
  \end{equation}
  The proof can now be completed as in the case of Theorem~\ref{Theorem1}.
\end{proof}

\subsection{A posteriori error estimates}\label{SSe:4.3}
Let $R$ be the \emph{residual} of the reconstruction $\widehat U,$
\begin{equation*}
R(t):= \widehat U'(t)+A(t)\widehat U(t)-f(t), \quad t\in (t_n,t_{n+1}], \quad n=0,\dotsc,N-1.
\end{equation*}
Then, the error $\hat e:=u-\widehat U$ satisfies the \emph{error equation}
\begin{equation}
\label{eq:err-eq-1-na}
\hat e'(t)+A(t)\hat e(t)=-R(t), \quad t\in (t_n,t_{n+1}], \ n=0,\dotsc,N-1; \quad \hat e(0)=0.
\end{equation}
Let us now fix an $s\in (0,T].$ To apply the \emph{maximal
  $L^p$-regularity} of the operator $A(s),$ frozen at time $s,$ we
rewrite \eqref{eq:err-eq-1-na} in the form
\begin{equation*}
\hat e'(t)+A(s)\hat e(t)=[A(s)-A(t)]\hat e(t)-R(t), \quad t\in (0,s].
\end{equation*}
Proceeding as in the proof of the maximal regularity property for
nonautonomous parabolic equations in the continuous case, cf., e.g.,
\cite[\textsection 4.2]{AM-NM} for the derivation a posteriori
estimates for Radau~IIA methods, we obtain the desired a posteriori
error estimate
\begin{equation*}
\|\hat e'\|_{L^p((0,s);X)}+\|A(s)\hat e\|_{L^p((0,s);X)}\leqslant C\|R\|_{L^p((0,s);X)},
\quad 0<s\leqslant T,
\end{equation*}
for any $p\in (1,\infty),$ with a constant $C$ depending on $p, X, L,$
and $T,$ but independent of $s.$

Furthermore, the triangle inequality applied to \eqref{eq:err-eq-1-na}
yields a lower a posteriori error estimator,
\begin{equation*}
\|R\|_{L^p((0,s);X)} \leqslant  \|\hat e'\|_{L^p((0,s);X)}+\|A(\cdot)\hat e\|_{L^p((0,s);X)},
\quad 0<s\leqslant T.
\end{equation*}

As in the autonomous case, we see that the a posteriori error
estimator is of optimal order as an immediate consequence of the a
priori error estimate \eqref{apriori-estimate2-na}.

\appendix

\section{Interpolation error estimates}
\label{Se:5}

We prove error estimates for $\tilde{v}$ and $\hat{\tilde{v}}$.   We
shall use a standard argument based on the Bramble--Hilbert lemma;
cf.~\cite[(4.4.4)]{BrennerScott}. The key ingredients are reproduction
of polynomials and boundedness with respect to a relevant Sobolev
norm.  

\subsection{Existence and uniqueness of \texorpdfstring{$\bm{\tilde{v}}$}{\tilde v}}
\label{SSe:A.1}

The interpolant $\tilde v$ of a function $v \in C([0,T];X)$ can be
expressed in terms of the value $v(t_{n+1})$ of $v$ at $t_{n+1}$ and
the Legendre coefficients $v_0,\dotsc,v_{q-2}\in X$ of $v,$
\begin{equation}
\label{eq:Legendre-coeff} 
v_i:=\frac 1{\|L_{ni}\|_{L^2(J_n)}^2}\int_{J_n} L_{ni}(t)v(t)\, \d t.
\end{equation}
More precisely, for $t\in J_n$, 
\begin{equation}
\label{eq:tilde-u2}
\tilde v(t)=(P_{q-2}v)(t)+L_{n,q-1}(t)\Big [v(t_{n+1})-\sum_{i=0}^{q-2}v_i\Big ],\quad
P_{q-2}v=\sum_{i=0}^{q-2}L_{ni} v_i,
\end{equation}
with $P_{q-2}$ the piecewise $L^2$-projection onto $\X_k^{\text{d}}(q-1)$;
here, $L_{ni}$ are the Legendre polynomials $L_i$ of degree $i$ shifted to the interval $J_n,$
\begin{equation*}
L_{ni}\big (\tfrac 12(t_n+t_{n+1}+ks)\big )=L_i(s), \quad s\in [-1,1];
\end{equation*}
see \cite[(3.2)]{SS}.
Indeed, the part $P_{q-2}v$ of $\tilde v$ on the right-hand side of \eqref{eq:tilde-u2}
is due to the orthogonality of $v-\tilde v$ to the orthogonal polynomials $L_{ni}, i=0,\dotsc,q-2.$
The coefficient of $L_{n,q-1}$ is determined by the interpolation condition 
$\tilde v(t_{n+1})=v(t_{n+1})$ and the property $L_{ni}(t_{n+1})=1$
of the Legendre polynomials, which yields $(P_{q-2}v)(t_{n+1})=v_0+\dotsb+v_{q-2}.$
In particular,  $\tilde v=v$ for  $ v\in \X_k^{\text{d}} (q-1).$

\subsection{Proof of the approximation property \eqref{eq:estim-desired1}}\label{SSe:A.2}

We first consider interpolation of functions on the unit interval
$(0,1)$.  Here we assume $1\leqslant p\leqslant\infty$ and $q\geqslant 1$
and hence Sobolev's embedding ${W^{q,p}((0,1);X)}\subset C([0,1];X)$
holds. From \eqref{eq:Legendre-coeff} and \eqref{eq:tilde-u2} it is
clear that the interpolation operator
$C([0,1];X)\to {W^{q,p}((0,1);X)}, v\mapsto \tilde{v},$ is bounded, so
that by Sobolev's inequality,
\begin{equation}
  \label{eq:2}
  \| \tilde{v} \|_{W^{q,p}((0,1);X)}
  \leqslant C \| v \|_{C([0,1];X)}
  \leqslant C \| v \|_{W^{q,p}((0,1);X)} .
\end{equation}
Moreover, 
\begin{equation}
  \label{eq:2b}
  \tilde{v}=v \quad \forall v\in\P_X(q-1). 
\end{equation}
Hence, by a standard argument based on the Bramble--Hilbert lemma, we
have
\begin{equation}
  \label{eq:3}
    \| v-\tilde{v} \|_{W^{q,p}((0,1);X)}
    \leqslant C | v |_{W^{q,p}((0,1);X)}.  
\end{equation}
Here $| v |_{W^{q,p}((0,1);X)}=\| v^{(q)} \|_{L^{p}((0,1);X)}$ denotes the seminorm.
  
In fact, by the Bramble--Hilbert lemma there is a Taylor polynomial
$\bar{v}\in\P_X(q-1)$ such that
\begin{equation}
  \label{eq:6}
  \| v-\bar{v} \|_{W^{q,p}((0,1);X)} \leqslant C | v |_{W^{q,p}((0,1);X)}.  
\end{equation}
Hence, since by \eqref{eq:2b} $\tilde{\bar{v}}=\bar{v}$, and by
\eqref{eq:2}, \eqref{eq:6},
\begin{align*}
  \| v-\tilde{v} \|_{W^{q,p}((0,1);X)}
  &
    \leqslant 
  \| v-\bar{v} \|_{W^{q,p}((0,1);X)}
  +
  \| \bar{v}-\tilde{v} \|_{W^{q,p}((0,1);X)}
  \\
  &
  =
  \| v-\bar{v} \|_{W^{q,p}((0,1);X)}
  +
  \| (\bar{v}-{v})^{\tilde{}} \,\|_{W^{q,p}((0,1);X)}
  \\
  &
    \leqslant 
  C\| v-\bar{v} \|_{W^{q,p}((0,1);X)}
  \leqslant C | v |_{W^{q,p}((0,1);X)},  
\end{align*}
which is \eqref{eq:3}.

Since $| v |_{W^{j,p}((0,1);X)}\leqslant \| v \|_{W^{q,p}((0,1);X)}$
and $| v |_{L^\infty((0,1);X)}\leqslant C\| v \|_{W^{q,p}((0,1);X)}$,
from \eqref{eq:3} we now conclude
\begin{align*}
    | v-\tilde{v} |_{W^{j,p}((0,1);X)}
    &
    \leqslant C | v |_{W^{q,p}((0,1);X)}, \quad j=0,\dotsc,q, 
    \\
    \| v-\tilde{v} \|_{L^\infty((0,1);X)}
    &
    \leqslant C | v |_{W^{q,p}((0,1);X)}.  
\end{align*}
Finally, by a change of variables and with a slight abuse of notation,
we have 
\begin{align*}
  | v |_{W^{j,p}((0,1);X)} & = k^{j-1/p}| v |_{W^{j,p}(J_n;X)}, \\
  \| v \|_{L^\infty((0,1);X)} & = \| v-\tilde{v} \|_{L^\infty(J_n;X)} .  
\end{align*}
This proves 
\begin{align}
  \label{eq:5a}
  | v-\tilde{v} |_{W^{j,p}(J_n;X)}
  & \leqslant C k^{q-j}| v |_{W^{q,p}(J_n;X)},
    \quad j=0,\dotsc,q, \\
  \label{eq:5b}
  \| v-\tilde{v} \|_{L^\infty(J_n;X)}
  & \leqslant C k^{q-1/p}| v |_{W^{q,p}(J_n;X)} . 
\end{align}
As a consequence of \eqref{eq:5a} with $j=0$ (or of \eqref{eq:5b}) we have
\eqref{eq:estim-desired1}.  This is the only case that we need in the
present work; the other cases are included for the sake of
completeness.  

The approximation property \eqref{eq:5b} for general $p$ and for
Banach spaces is an analogue of \cite[(12.10)]{T} for $p=2$ and for
Hilbert spaces.

\subsection{Proof of the approximation properties
  \eqref{eq:approx-prop-desired1} and \eqref{eq:approx-prop-desired2}}
\label{SSe:A.3}

We begin by proving that the operator $v\mapsto\hat{\tilde{v}}$
reproduces polynomials of degree at most $q$. Again we assume
$1\leqslant p\leqslant\infty$.

\begin{lemma}[Reproduction property]\label{Le:reprod} 
Assume that $v\in \X_k^{\text{c}} (q).$ Then, the following
reproduction property holds:
\begin{equation}
\label{eq:reprod1}
\hat{\tilde{v}}=v.
\end{equation}
In particular, if $v\in \X_k^{\text{c}} (q-1)$, then
$\hat{\tilde{v}}=\tilde{v}=v$.
\end{lemma}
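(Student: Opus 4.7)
The plan is to argue on each subinterval $J_n$ separately: both $v|_{J_n}$ and $\hat{\tilde v}|_{J_n}$ are $X$-valued polynomials of degree at most $q$, so if I can show that they agree at the $q+1$ reconstruction nodes $t_n,t_{n1},\dotsc,t_{nq}$, then unisolvence of polynomial interpolation forces them to coincide. Patching over $n=0,\dotsc,N-1$ (together with the value at $t=0$) then delivers the global identity $\hat{\tilde v}=v$.

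For the $q$ Radau nodes $t_{n1},\dotsc,t_{nq}$, consider the difference $w:=v|_{J_n}-\tilde v|_{J_n}\in\P_X(q)$. The defining conditions of $\tilde v$ give $w(t_{n+1})=0$ (which already handles $t_{nq}=t_{n+1}$ since $c_q=1$) together with $\int_{J_n}w(t)\,t^j\,\d t=0$ for $j=0,\dotsc,q-2$. Reducing to the scalar case by pairing with an arbitrary $x'\in X'$, these $q$ linear constraints cut out an at-most one-dimensional subspace of $\P(q)$. That subspace is spanned by the shifted Radau polynomial $\pi_{n,q}(t):=\prod_{i=1}^q(t-t_{ni})$: vanishing at $t_{nq}=t_{n+1}$ is automatic since $c_q=1$, and $L^2$-orthogonality of $\pi_{n,q}$ to $\P(q-2)$ on $J_n$ is the classical characterizing property of the Radau~IIA nodal polynomial. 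Consequently $w=c_n\pi_{n,q}$ for some $c_n\in X$, so in particular $w(t_{ni})=0$ for $i=1,\dotsc,q$.

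For the remaining reconstruction node $t_{n0}=t_n$, note that $\hat{\tilde v}(t_n)$ is by definition the left-limit value $\tilde v_n$. For $n\geqslant 1$ the interpolation condition on the previous subinterval gives $\tilde v_n=\tilde v|_{J_{n-1}}(t_n)=v(t_n)$, which equals $v|_{J_n}(t_n)$ by continuity of $v$; for $n=0$ the same equality holds by the initial convention $\tilde v(0)=v(0)$. Combined with the preceding step this supplies the full $q+1$ points of agreement on $J_n$, hence $\hat{\tilde v}|_{J_n}=v|_{J_n}$. The in-particular claim for $v\in\X_k^{\text{c}}(q-1)$ then follows immediately: such a $v$ already satisfies the defining conditions of $\tilde v$, so by the uniqueness established in Section~\ref{SSe:A.1} one has $\tilde v=v$, and $\hat v=v$ because the reconstruction at $q+1$ distinct nodes reproduces any polynomial of degree at most $q-1$. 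The only substantive step is the identification of the one-dimensional constraint subspace with the span of the Radau polynomial; once that classical fact is in hand, everything else is a direct count of zeros and dimensions.
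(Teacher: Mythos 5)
Your argument is correct and ultimately rests on the same key fact as the paper's proof, namely the $L^2(J_n)$-orthogonality of the Radau nodal polynomial $\prod_{i=1}^q(t-t_{ni})$ to $\P(q-2)$ (equivalently, exactness of the Radau quadrature for degree $2q-2$), but the organization is reversed. The paper introduces the Radau-node interpolant $w\in\X_k^{\text{d}}(q-1)$ of $v$, observes that $\widehat w=v$ directly from \eqref{eq:ext-interp}, and then identifies $w$ with $\tilde v$ by writing the interpolation error in Lagrange form, $v-w=(v_n-w_n^+)\ell_{n0}$, which exhibits $v-w$ as an explicit multiple of the nodal polynomial. You instead go backwards: you place $v-\tilde v$ in the null space of the $q$ defining constraints and identify that null space with the span of the nodal polynomial. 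The one step you assert without justification is that these $q$ constraints ``cut out an at-most one-dimensional subspace of $\P(q)$'': a priori, $q$ functionals on the $(q+1)$-dimensional space $\P(q)$ only guarantee a null space of dimension \emph{at least} one, and you need their linear independence to conclude that $v-\tilde v$ is a multiple of $\pi_{n,q}$ rather than some other element of a larger null space. The claim is true --- it follows from the unisolvence of the $\tilde v$-interpolation problem on the $q$-dimensional space $\P(q-1)$, established constructively in Subsection~\ref{SSe:A.1}, or directly by testing a purported dependency $\alpha P(t_{n+1})+\int_{J_n}P(t)r(t)\,\d t=0$, $r\in\P(q-2)$, with $P(t)=(t_{n+1}-t)r(t)$ --- but it must be said; the paper's constructive formulation sidesteps the issue entirely, since the Lagrange error formula requires no dimension count. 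The rest of your proof (the reduction to the scalar case, the separate treatment of the node $t_{n0}$, the patching by unisolvence, and the in-particular claim) is sound.
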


\begin{proof}
  Let us consider the Lagrange interpolant
  $w\in\X_k^{\text{d}} (q-1),$ with $w(0)=v(0),$ of $v$ at the Radau
  nodes,
  \begin{equation}\label{new-interpolant}
    w ( t_{nj})= v( t_{nj}), \quad j=1, \dotsc ,q,
  \end{equation}
  for $n=0,\dotsc,N-1.$ Then, it follows immediately from the definition
  \eqref{eq:ext-interp} that $v\in \X_k^{\text{c}} (q)$ is the
  reconstruction of $w\in\X_k^{\text{d}} (q-1)$, that is, $v=\widehat w$.
  
  It remains to show that $\tilde v=w,$ that is, that $w$ satisfies the
  orthogonality condition
  \begin{equation*}
    \int_{t_n}^{t_{n+1}} \big (v(t)-w(t)\big ) t^j\, \d t=0,\quad j=0,\dotsc,q-2.
  \end{equation*}
  This can be easily seen by using the Lagrange form of the
  interpolation error $v(t)-w(t)=(v_n-w_n^+)\ell_{n0}(t),$ with
  $ \ell_{n0}$ the polynomial of degree $q$ vanishing at the nodes
  $t_{n1},\dotsc,t_{nq}$ and taking the value $1$ at the node
  $t_{n0},$ and the orthogonality of $\ell_{n0}$ to polynomials of
  degree at most $q-2$ or, equivalently, by the exactness of the Radau
  quadrature rule for polynomials of degree $2q-2$.
  
  For $v\in \X_k^{\text{c}} (q-1)$, we obviously have $w=v$ by
  \eqref{new-interpolant}, i.e., $\tilde v=v.$
\end{proof}

We next consider the boundedness of $v\mapsto\hat{\tilde{v}}$.  Let
$v\in C([0,T];X)$. The hat operator interpolates $\tilde{v}$ at the
node values $\hat{\tilde{v}}(t_{nj})=\tilde{v}(t_{nj})$,
$j=0,\dotsc,q$, where $\tilde{v}(t_{n0})=\tilde{v}(t_{n})=v(t_n)$ and
$\tilde{v}(t_{nq})=\tilde{v}(t_{n+1})=v(t_{n+1})$. Thus, in view of
\eqref{eq:Legendre-coeff} and \eqref{eq:tilde-u2},
$\hat{\tilde{v}}|_{J_n}$ depends only on the values of $v$ in
$[t_n,t_{n+1}]$.  More precisely, after a transformation to the unit
interval, we have
\begin{equation*}
  \| \hat{\tilde{v}} \|_{W^{q+1,p}((0,1);X)}
  \leqslant C \| v \|_{C([0,1];X)}
  \leqslant C \| v \|_{W^{q+1,p}((0,1);X)} .
\end{equation*}
Moreover, according to \eqref{eq:reprod1},
\begin{equation*}
  \hat{\tilde{v}}=v \quad \forall v\in\P_X(q). 
\end{equation*}
By the same argument as in Subsection~\ref{SSe:A.2}, we conclude
\begin{align*}
    | v-\hat{\tilde{v}} |_{W^{j,p}((0,1);X)}
  & \leqslant C | v |_{W^{q+1,p}((0,1);X)}, \quad j=0,\dotsc,q+1, \\ 
  \| v-\hat{\tilde{v}} \|_{L^\infty((0,1);X)}
  & \leqslant C | v |_{W^{q+1,p}((0,1);X)} ,
\end{align*}
which proves
\begin{align*}
  | v-\hat{\tilde{v}} |_{W^{j,p}(J_n;X)}
  & \leqslant C k^{q+1-j}| v |_{W^{q+1,p}(J_n;X)}, \quad
  j=0,\dotsc,q+1, \\
  \| v-\hat{\tilde{v}} \|_{L^\infty(J_n;X)}
  & \leqslant C k^{q+1-1/p}| v |_{W^{q+1,p}(J_n;X)} . 
\end{align*}
In particular, for $j=1$, we obtain \eqref{eq:approx-prop-desired2}. 

Since $v\mapsto\hat{\tilde{v}}$ also reproduces polynomials of degree
at most $q-1$, the same argument shows
\begin{align}
  \label{eq:5ae}
  | v-\hat{\tilde{v}} |_{W^{j,p}(J_n;X)}
  & \leqslant C k^{q-j}| v |_{W^{q,p}(J_n;X)}, \quad j=0,\dotsc,q, \\
    \label{eq:5af} 
    \| v-\hat{\tilde{v}} \|_{L^\infty(J_n;X)}
  & \leqslant C k^{q-1/p}| v |_{W^{q,p}(J_n;X)} . 
\end{align}
From \eqref{eq:5ae} for $j=0$ (or from \eqref{eq:5af}), we obtain
\eqref{eq:approx-prop-desired1}.
 
\bibliographystyle{amsplain}

\begin{thebibliography}{10}

\bibitem{AL} 
G. Akrivis and S. Larsson, 
\newblock
\emph{Error analysis for discontinuous Galerkin time-stepping methods for nonlinear
  parabolic equations via maximal regularity},
\newblock \newblock \href{https://arxiv.org/abs/2412.08375}{arXiv:2412.08375}
(Preprint).
 
\bibitem{AM-SINUM} 
G. Akrivis and Ch. Makridakis,
\newblock
\emph{On maximal regularity estimates for discontinuous Galerkin time-discrete methods},
\newblock SIAM J. Numer. Anal.  \textbf{60} (2022) 180--194.
\href{https://doi.org/10.1137/20M1383781}{DOI  10.1137/20M1383781}. 
 \MR{4368992}


\bibitem{AM-NM}
 G. Akrivis and Ch. G. Makridakis,
 \newblock
\emph{A posteriori error estimates for Radau~IIA methods via maximal regularity},
\newblock Numer. Math.   \textbf{150} (2022) 691--717.
\href{https://doi.org/10.1007/s00211-022-01271-6}{DOI 10.1007/s00211-022-01271-6}. 
  \MR{4393999}










\bibitem{BrennerScott}
  S.~C.~Brenner and L.~R.~Scott,
  \newblock 
  \emph{The {Mathematical} {Theory} of {Finite} {Element} {Methods}},
  \newblock
  Third ed., 
  Texts in Applied Mathematics, vol.~15, Springer, New York, 2008.
  \href{https://doi.org/10.1007/978-0-387-75934-0}{DOI 10.1007/978-0-387-75934-0}
  \MR{2373954}
    
 







\bibitem{DHP}
R. Denk, M. Hieber, and J. Prüss, 
\newblock
\emph{${\mathcal R}$-boundedness, Fourier multipliers and problems of elliptic and parabolic type}, 
\newblock  Mem. Amer. Math. Soc.  \textbf{166} (2003), no.\ 788, 114p.
\href{https://doi.org/http://dx.doi.org/10.1090/memo/0788}{DOI 10.1090/memo/0788}. 
\MR{2006641}

\bibitem{KK2024}
T. Kashiwabara and T. Kemmochi, 
\newblock
\emph{Discrete maximal regularity for the discontinuous Galerkin
  time-stepping method without logarithmic factor},  
\newblock SIAM J. Numer. Anal.  \textbf{62} (2024) 1638--1659.
\href{https://doi.org/10.1137/23M1580802}{DOI 10.1137/23M1580802}. 
\MR{4775682} 













 

\bibitem{KLL}
B. Kov\'acs, B. Li, and C. Lubich, 
\newblock
\emph{A-stable time discretizations preserve maximal parabolic regularity},
\newblock SIAM J. Numer. Anal.  \textbf{54} (2016) 3600--3624.
\href{https://doi.org/10.1137/15M1040918}{DOI  10.1137/15M1040918}. 
\MR{3582825}

\bibitem{KuLL}
P. C. Kunstmann, B. Li, and C. Lubich, 
\newblock
\emph{Runge--Kutta time discretization of nonlinear parabolic
  equations studied via discrete maximal parabolic regularity}, 
\newblock Found. Comput. Math.  \textbf{18} (2018) 1109--1130.
 \href{https://doi.org/10.1007/s10208-017-9364-x}{DOI  10.1007/s10208-017-9364-x}. 
 \MR{3857906}


\bibitem{KuW}
P. C. Kunstmann and L. Weis, 
\newblock
\emph{Maximal $L_p$-regularity for Parabolic Equations, Fourier Multiplier
Theorems and $H^\infty$-functional Calculus}, in Functional Analytic Methods for Evolution Equations,
\newblock Lecture Notes in Mathematics  \textbf{1855} (2004) 65--311.
\href{https://doi.org/10.1007/978-3-540-44653-8_2}{DOI 10.1007/978-3-540-44653-8\_2}. 
\MR{2108959}




\bibitem{LV1}
D. Leykekhman and B. Vexler,
\newblock
\emph{Discrete maximal parabolic regularity for Galerkin finite element methods},
\newblock Numer. Math. \textbf{135} (2017) 923--952.
\href{https://doi.org/10.1007/s00211-016-0821-2}{DOI 10.1007/s00211-006-0821-2}. 
\MR{3606467}


\bibitem{LV2}
D. Leykekhman and B. Vexler,
\newblock
\emph{Discrete maximal parabolic regularity for Galerkin finite element methods
for nonautonomous parabolic problems},
\newblock SIAM J. Numer. Anal. \textbf{56} (2018) 2178--2202.
\href{https://doi.org/10.1137/17M114100X}{DOI 10.1137/17M114100X}. 
\MR{3829516}






\bibitem{MN}
Ch. Makridakis and R.~H. Nochetto,
\newblock \emph{A posteriori error analysis for higher order 
dissipative methods for evolution problems},
\newblock Numer. Math. \textbf{104} (2006) 489--514.
\href{https://doi.org/10.1007/s00211-006-0013-6}{DOI 10.1007/s00211-006-0013-6}. 
\MR{2249675} 




\bibitem{SS}
D. Sch\"otzau and C. Schwab,
\newblock
\emph{Time discretization of parabolic problems by the hp-version of the discontinuous Galerkin finite element method},
\newblock SIAM J. Numer. Anal. \textbf{38} (2000) 837--875.
\href{https://doi.org/10.1137/S0036142999352394}{DOI 10.1137/S0036142999352394}. 
\MR{1781206}

\bibitem{T}
V. Thom{\'e}e,
\newblock \emph{Galerkin Finite Element Methods for Parabolic Problems},
2$^\text{nd}$ ed., \newblock Springer--Verlag, Berlin, 2006.
\href{https://doi.org/10.1007/3-540-33122-0}{DOI 10.1007/3-540-33122-0}. 
\MR{2249024}









\end{thebibliography}

\end{document}